\newtheorem{lemma}{Lemma}[section]
\newtheorem{theorem}{Theorem}[section]
\newtheorem*{corollary}{Corollary}
\newtheorem*{Proposition}{Proposition}
\theoremstyle{definition}
\newtheorem{remark}{Remark}[section]
\numberwithin{equation}{section}
\begin{document}

\thispagestyle{empty}

\begin{center}
    \mbox{\hskip-0.3cm \fontsize{12}{14pt}\selectfont
    \textbf{Pitt inequality for the linear structurally damped $\sigma$-evolution equations}} \\[3mm]
\end{center}

\medskip

\ \!\!\!\hrulefill \
%\centerline{\underline{\phantom{xxxxxxxxxxxxxxxxxxxxxxxxxxxxxxxxxxxxxxxxxxxxxxxxxxxxxxxxxxxxxxx}}}
\begin{center}
\textbf{\fontsize{11}{14pt}\selectfont Khaldi Sa\"{i}d}
\end{center}
\vskip+1cm
\thispagestyle{empty}

\noindent \textbf{Abstract.}
This work is devoted to improve the time decay estimates for the solution and some of its derivatives of the linear structurally damped $\sigma$-evolution equations. The Pitt inequality is the main tool provided that the initial data lies in some weighted spaces.

\vskip+0.5cm

\noindent \textbf{2010 Mathematics Subject Classification.}
    42B10, 35G10, 35B45

\medskip

\noindent \textbf{Key words and phrases.}
    $\sigma$-evolution equations; structural damping; Pitt inequality; $L^{2}$-estimates.

\vskip+0.4cm

%\noindent {\fontspec{KA_DUMBADZE}\fontsize{11}{10pt}\selectfont
%\textbf{áƒ áƒ”áƒ–áƒ˜áƒ£áƒ›áƒ”.} \fontspec{KA_AKADEMIURI}\fontsize{10}{12pt}\selectfont
%}
% ------------ [Running Heads]

 \newpage
\pagestyle{fancy}
\fancyhead{}
\fancyhead[EC]{\hfill \textsf{\footnotesize Khaldi Sa\"{\i}d}}
\fancyhead[EL,OR]{\thepage}
\fancyhead[OC]{\textsf{\footnotesize Pitt inequality for the linear structurally damped $\sigma$-evolution equations}\hfill }
\fancyfoot[C]{}
\renewcommand\headrulewidth{0.5pt}
%-------------------------------------------------------------------------------

\section{Introduction and main results}
Our interest in this work is to derive a new time decay estimates for the solution and some of its derivatives to the Cauchy problem for homogeneous linear $\sigma$-evolution equations with structural damping:
\begin{equation}\label{1.1}
\partial^{2}_{t}u+(-\Delta)^{\sigma} u+(-\Delta)^{\delta}\partial_{t}u=0, \ u(0,x)=u_{0}(x), \ \ \partial_{t}u(0,x)=u_{1}(x),
\end{equation} 
where
$$
t\geqslant0, \ \ x\in\mathbb{R}^{n},\ \  \sigma\geq 1 \ \ and\ \  \delta\in\left( 0,\frac{\sigma}{2}\right) .
$$
First, for the ease of reading, we will use the following notations:
\begin{itemize}
	\item For any $x\in \mathbb{R}^{n}$, by $|x|$ we mean the usual norm in $\mathbb{R}^{n}$, i.e,
	$|x|=(x_{1}^{2}+\cdots x_{n}^{2})^{1/2}$, and so on for $\xi\in\mathbb{R}^{n}.$
	\item The fractional Laplacian operator $(-\Delta)^{a}$  is defined on $\mathbb{R}^{n}$ via the Fourier transform $\mathcal{F}$ as follows:
	\begin{equation}\label{1.2}
	\mathcal{F}\left((-\Delta)^{a}f\right)(\xi)= |\xi|^{2a}\mathcal{F}\left(f\right) (\xi)  \quad\text{ for all }\ \xi \in\mathbb{R}^{n}, \ \ a=\sigma, \delta.
	\end{equation}
	\item We denote $f\lesssim g$ when there exists a constant $C>0$ such that $f \leq Cg$.
	\item The notation $f\approx g$ means there exist $c_{1}, c_{2} > 0$ such that $c_{1}g\leq f \leq c_{2}g$.  
	\item For any $s\geq 0$ the Sobolev spaces $H^{s}(\mathbb{R}^{n})$ are given by:
	$$f\in H^{s}(\mathbb{R}^{n}) \ \ \text{iff}\ \  \|f\|_{H^{s}(\mathbb{R}^{n})}=\|(1+|\xi|^{2})^{\frac{s}{2}}\mathcal{F}(f)\|_{L^{2}(\mathbb{R}^{n})}<\infty.$$
	\item For any $m\in(1,2]$ and $\rho\geq 0$, the weighted Lebesgue spaces $L^{\rho,m}(\mathbb{R}^{n})$ are defined by: 
	$$ g \in L^{\rho,m}(\mathbb{R}^{n}) \ \ \text{iff} \ \ \||x|^{\rho}g\|_{L^{m}(\mathbb{R}^{n})}=\left(\int_{\mathbb{R}^{n}}|x|^{\rho m}|g(x)|^{m}dx\right)^{1/m}<\infty.$$
	
\end{itemize}
It is now known that the Cauchy problem (\ref{1.1}) has been widely studied, where the authors have derived some sharp linear estimates by assuming initial data in usual Sobolev or Lebesgue spaces (see for instance \cite{PhamKainaneReissig, D'AbbiccoEbert2017, D'AbbiccoEbertapplication, D'AbbiccoEbertdiffusion, D'AbbiccoReissig2014, DaoReissigL1, DaoReissigblowup}).  For the sake of clarity, we recall here the following sharp $(L^{m}\cap L^{2})-L^{2}$ linear estimates for (\ref{1.1}) which are recently used in \cite{Daosystem} (see again \cite{D'AbbiccoEbert2017}):  
\begin{Proposition}\cite{Daosystem} \label{Linearestimates0}
	Let $\delta\in(0, \frac{\sigma}{2})$ in (\ref{1.1}) and $m\in[1,2)$. The solutions $u$ to \eqref{1.1} satisfy the following $(L^{m}\cap L^{2})-L^{2}$ estimates
	\begin{align}
	\|\partial_{t}^{j}(-\Delta)^{a/2}u(t,\cdot)\|_{L^{2}(\mathbb{R}^{n})}&\lesssim(1+t)^{-\frac{n}{2(\sigma-\delta)}\left(\frac{1}{m}-\frac{1}{2}\right)-\frac{a+2j(\sigma-\delta)}{2(\sigma-\delta)}} \left\|u_{0}\right\| _{L^{m}(\mathbb{R}^{n})\cap H^{a+j\sigma}(\mathbb{R}^{n})}\nonumber \\
	&\hspace{-1.5cm} 
	+(1+t)^{-\frac{n}{2(\sigma-\delta)}\left(\frac{1}{m}-\frac{1}{2}\right)-\frac{a-2\delta}{2(\sigma-\delta)}-j} \left\|u_{1}\right\| _{L^{m}(\mathbb{R}^{n})\cap H^{\max\{0,a+(j-1)\sigma\}}(\mathbb{R}^{n})} \label{1.3}
	\end{align}
	for any $a\geq 0$, $j=0,1$ and $n>\frac{2m(2\delta-a)}{2-m}$. 
\end{Proposition}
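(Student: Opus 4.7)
The plan is to take the spatial Fourier transform of \eqref{1.1}, reducing the problem to the one-parameter family of second order ODEs
\[
\hat u_{tt}(t,\xi)+|\xi|^{2\delta}\hat u_t(t,\xi)+|\xi|^{2\sigma}\hat u(t,\xi)=0
\]
with characteristic roots $\lambda_\pm(\xi)=\tfrac12\bigl(-|\xi|^{2\delta}\pm\sqrt{|\xi|^{4\delta}-4|\xi|^{2\sigma}}\bigr)$. The corresponding explicit solution reads $\hat u(t,\xi)=\hat K_0(t,\xi)\hat u_0(\xi)+\hat K_1(t,\xi)\hat u_1(\xi)$ with
\[
\hat K_0=\frac{\lambda_+ e^{\lambda_- t}-\lambda_- e^{\lambda_+ t}}{\lambda_+-\lambda_-},\qquad \hat K_1=\frac{e^{\lambda_+ t}-e^{\lambda_- t}}{\lambda_+-\lambda_-},
\]
so by Plancherel it suffices to bound the $L^2_\xi$ norms of $|\xi|^a\partial_t^j\hat K_\ell(t,\xi)\hat u_\ell(\xi)$ for $\ell=0,1$.

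Since $\delta<\sigma/2$, the discriminant $|\xi|^{4\delta}-4|\xi|^{2\sigma}$ is positive on a bounded neighbourhood of the origin and negative for $|\xi|$ large; I therefore introduce a smooth cutoff and split $\mathbb{R}^n_\xi$ into a low-frequency zone $\{|\xi|\le\xi_0\}$ and its complement. In the low zone the roots are real with $\lambda_+(\xi)\approx -|\xi|^{2(\sigma-\delta)}$ and $\lambda_-(\xi)\approx -|\xi|^{2\delta}$, so $e^{\lambda_- t}$ is negligible compared with $e^{\lambda_+ t}\approx e^{-c|\xi|^{2(\sigma-\delta)}t}$, which plays the role of a heat kernel for $(-\Delta)^{\sigma-\delta}$; moreover $\lambda_+-\lambda_-\approx|\xi|^{2\delta}$, which accounts for the factor $|\xi|^{-2\delta}$ entering $\hat K_1$. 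In the high zone the roots form a complex conjugate pair with $\mathrm{Re}\,\lambda_\pm\approx -\tfrac12|\xi|^{2\delta}$ and $\mathrm{Im}\,\lambda_\pm\approx\pm|\xi|^{\sigma}$, so both multipliers are pointwise controlled by $e^{-c|\xi|^{2\delta}t}$ times polynomial weights in $|\xi|$.

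For the low-frequency part I apply H\"{o}lder with $1/2=1/r+1/m'$, where $m'$ is the H\"{o}lder conjugate of $m$, together with the Hausdorff--Young inequality $\|\hat u_\ell\|_{L^{m'}}\lesssim \|u_\ell\|_{L^m}$; this reduces matters to computing the $L^r$-norm of $|\xi|^{a-2\delta\ell+2j(\sigma-\delta)}e^{-c|\xi|^{2(\sigma-\delta)}t}$ on $\{|\xi|\le\xi_0\}$, which via the rescaling $\eta=t^{1/(2(\sigma-\delta))}\xi$ produces exactly the decay rates announced in \eqref{1.3}. The integrability of the rescaled integrand near the origin reads $n>\tfrac{2m}{2-m}\bigl(2\delta\ell-a-2j(\sigma-\delta)\bigr)$, whose most binding instance is $\ell=1$, $j=0$ and yields the hypothesis $n>2m(2\delta-a)/(2-m)$. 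For the high-frequency part the uniform exponential damping $e^{-c|\xi|^{2\delta}t}$ produces decay of arbitrary polynomial order in $t$, while the polynomial weights in $|\xi|$ are absorbed into the Sobolev norms $H^{a+j\sigma}$ and $H^{\max\{0,a+(j-1)\sigma\}}$ appearing on the right-hand side.

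The delicate step is the narrow transition layer around the unique value of $|\xi|$ where $\lambda_+-\lambda_-$ vanishes; a naive bound on $\hat K_1$ there exhibits a spurious singularity. This is cured by the elementary identity $e^{\lambda_+ t}-e^{\lambda_- t}=(\lambda_+-\lambda_-)\int_0^t e^{s\lambda_-+(t-s)\lambda_+}\,ds$, which immediately yields $|\hat K_1(t,\xi)|\le t\,\max(e^{\mathrm{Re}\,\lambda_+ t},e^{\mathrm{Re}\,\lambda_- t})$ with no singularity at all; an analogous integral representation handles $\hat K_0$ and the time derivatives $\partial_t\hat K_\ell$. Since the transition layer has bounded Lebesgue measure and the real parts of $\lambda_\pm$ stay strictly negative there, its contribution is exponentially small in $t$ and can be absorbed into the other two bounds. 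Combining the estimates on the three zones finally produces \eqref{1.3}.
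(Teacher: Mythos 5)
Your argument is correct and follows essentially the same route the paper itself employs (for its own Theorem \ref{Linearestimates1}): Fourier diagonalization, the root asymptotics \eqref{3.2}, a smooth low/high frequency cutoff, H\"older plus Hausdorff--Young at low frequencies, and exponential decay with the polynomial weights absorbed into Sobolev norms at high frequencies. Note that the paper does not actually prove this Proposition --- it is quoted from \cite{Daosystem} --- so the only in-paper comparison is the analogous computation in Section \ref{proof}; relative to that, your explicit treatment of the transition zone where $\lambda_{+}=\lambda_{-}$ via the integral representation of $\hat{K}_{1}$ is a detail the paper leaves implicit, and is a welcome addition.
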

The sharpness of these estimates causes by the fact that the structural damping $(-\Delta)^{\delta}\partial_{t}u$ with $2\delta\in(0,\sigma)$ leads the roots of the full characteristic equation to be real and the so-called \textit{diffusion phenomena} appears, for this reason the solution of (\ref{1.1}) inherits the sharpness of the estimates as those for diffusion equation. The application of the above linear estimates together with Duhamel's principle and Banach fixed point theorem gives us one of the most important quantities in recent years  that is the so-called \textit{critical exponent} for the corresponding semi-linear Cauchy problem  with power nonlinearity 
\begin{equation}\label{1.4}
\partial^{2}_{tt}u+(-\Delta)^{\sigma} u+(-\Delta)^{\delta}\partial_{t}u=|u|^{p}, \ u(0,x)=u_{0}(x), \ \ \partial_{t}u(0,x)=u_{1}(x), \ \ p>1.
\end{equation}
The critical exponent $p_{crit}$ can be seen as a threshold between the two behaviors of the solution, viz, a blow up (in finite time) for $1<p<p_{crit}$ or global existence (in time) for $p>p_{crit}$. This exponent is given by (see again \cite{D'AbbiccoEbert2017}, \cite{DaoReissigblowup})
\begin{equation}\label{1.5}
p_{crit}(n,m,\sigma, \delta)= 1+\frac{2m\sigma}{n-2m\delta}, \ \ \ \ \ \ m\in[1,2),
\end{equation}
where the initial data are chosen as follows : 
\begin{equation*}
u_{0} \in H^{\sigma}(\mathbb{R}^{n}) \cap L^{m}(\mathbb{R}^{n}),  \ \ u_{1} \in L^{2}(\mathbb{R}^{n})\cap L^{m}(\mathbb{R}^{n}).
\end{equation*}
From the above estimates we can see that the structural damping $(-\Delta)^{\delta}\partial_{t}u$ generates an additional time decay estimates of the form $(1+t)^{\frac{\delta}{\sigma-\delta}}$ with respect to $u_{1}$ and in all the $L^{2}$-norm of the solution and some of its derivatives. This motivate us to use the Pitt's inequality to neglect this loss of decay in the $L^{2}$ estimates for solutions to (\ref{1.1}). But this approach requires that the initial data $(u_{0}, u_{1})$ belong to some weighted spaces. Our main result is read as follows.
\begin{theorem} \label{Linearestimates1}
	Let $\sigma\geq 1$, $2\delta\in (0,\sigma)$ in (\ref{1.1}). Let $m\in(1,2)$ and $n>\frac{2m\delta}{m-1}$. The solutions $u$ to \eqref{1.1} satisfy the following $(L^{m}\cap L^{2}) -L^{2}$ estimates
	\begin{align}
	\|u(t,\cdot)\|_{L^{2}(\mathbb{R}^{n})}&\lesssim(1+t)^{-\frac{n}{2(\sigma-\delta)}\left(\frac{1}{m}-\frac{1}{2}\right) } \|u_{0}\|_{L^{m}(\mathbb{R}^{n})\cap L^{2}(\mathbb{R}^{n})}\nonumber \\
	&\hspace{4cm} 
	+(1+t)^{-\frac{n}{2(\sigma-\delta)}\left(\frac{1}{m}-\frac{1}{2}\right)}\|u_{1}\| _{L^{2\delta,m}(\mathbb{R}^{n})\cap L^{2}(\mathbb{R}^{n})}, \label{1.6}
	\end{align}
	\begin{align}
	\|(-\Delta)^{\sigma/2}u(t,\cdot)\|_{L^{2}(\mathbb{R}^{n})}&\lesssim(1+t)^{-\frac{n}{2(\sigma-\delta)}\left(\frac{1}{m}-\frac{1}{2}\right)-\frac{\sigma}{2(\sigma-\delta)}} \|u_{0}\|_{L^{m}(\mathbb{R}^{n})\cap H^{\sigma}(\mathbb{R}^{n})}\nonumber \\
	&\hspace{2cm} 
	+(1+t)^{-\frac{n}{2(\sigma-\delta)}\left(\frac{1}{m}-\frac{1}{2}\right)-\frac{\sigma}{2(\sigma-\delta)}}\|u_{1}\| _{L^{2\delta,m}(\mathbb{R}^{n})\cap         L^{2}(\mathbb{R}^{n})}, \label{1.7}
	\end{align}
	\begin{align}
	\|\partial_{t}u(t,\cdot)\|_{L^{2}(\mathbb{R}^{n})}&\lesssim(1+t)^{-\frac{n}{2(\sigma-\delta)}\left(\frac{1}{m}-\frac{1}{2}\right)-1} \|u_{0}\|_{L^{m}(\mathbb{R}^{n})\cap H^{\sigma}(\mathbb{R}^{n})}\nonumber \\
	&\hspace{4cm} 
	+(1+t)^{-\frac{n}{2(\sigma-\delta)}\left(\frac{1}{m}-\frac{1}{2}\right)-1}\|u_{1}\| _{L^{2\delta,m}(\mathbb{R}^{n})\cap L^{2}(\mathbb{R}^{n})}. \label{1.8}
	\end{align}
	In particular, the following $L^{2}-L^{2}$ estimates hold:
	\begin{align}
	\|u(t,\cdot)\|_{L^{2}(\mathbb{R}^{n})} &\lesssim \|u_{0}\|_{L^{2}(\mathbb{R}^{n})}+\|u_{1}\|_{L^{2\delta,2}(\mathbb{R}^{n})\cap L^{2}(\mathbb{R}^{n})}, \label{1.9}
	\\ 
	\|(-\Delta)^{\sigma/2}u(t,\cdot)\|_{L^{2}(\mathbb{R}^{n})} &\lesssim(1+t)^{-\frac{\sigma}{2(\sigma-\delta)}}\left(  \|u_{0}\|_{H^{\sigma}(\mathbb{R}^{n})}+\|u_{1}\|_{L^{2\delta,2}(\mathbb{R}^{n})\cap L^{2}(\mathbb{R}^{n})}\right), \label{1.10}
	\\
	\|\partial_{t}u(t,\cdot)\|_{L^{2}(\mathbb{R}^{n})} &\lesssim (1+t)^{-1}\left( \|u_{0}\|_{H^{\sigma}(\mathbb{R}^{n})}+\|u_{1}\|_{L^{2\delta,2}(\mathbb{R}^{n})\cap L^{2}(\mathbb{R}^{n})}\right).\label{1.11}
	\end{align}
\end{theorem}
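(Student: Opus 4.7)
The proof will proceed in Fourier space. Setting $\hat{u}(t,\xi)=K_{0}(t,\xi)\hat{u}_{0}(\xi)+K_{1}(t,\xi)\hat{u}_{1}(\xi)$ with $K_0,K_1$ built from the characteristic roots
\begin{equation*}
\lambda_{\pm}(|\xi|)=\tfrac{1}{2}\Bigl(-|\xi|^{2\delta}\pm\sqrt{|\xi|^{4\delta}-4|\xi|^{2\sigma}}\Bigr)
\end{equation*}
of $\lambda^{2}+|\xi|^{2\delta}\lambda+|\xi|^{2\sigma}=0$, the hypothesis $2\delta<\sigma$ produces real negative roots for small $|\xi|$ with $\lambda_{+}\approx -|\xi|^{2(\sigma-\delta)}$, $\lambda_{-}\approx -|\xi|^{2\delta}$, $\lambda_{+}-\lambda_{-}\approx|\xi|^{2\delta}$, and a complex conjugate pair with real part $\approx -|\xi|^{2\delta}/2$ for large $|\xi|$. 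These asymptotics deliver the pointwise bounds
\begin{equation*}
|K_{0}(t,\xi)|\lesssim e^{-c|\xi|^{2(\sigma-\delta)}t},\qquad |K_{1}(t,\xi)|\lesssim |\xi|^{-2\delta}\,e^{-c|\xi|^{2(\sigma-\delta)}t}
\end{equation*}
on $\{|\xi|\le 1\}$, together with exponential decay of both kernels on $\{|\xi|\ge 1\}$.

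Next I split by a cut-off $\chi_{\{|\xi|\le 1\}}+\chi_{\{|\xi|\ge 1\}}=1$. The high-frequency pieces are handled at once by Plancherel against the $L^{2}$-component of the data norms, the exponential providing arbitrary polynomial decay. For the low-frequency $u_{0}$ contribution I apply H\"older in $\xi$ with exponents $(r,m'/2)$, where $r=m'/(m'-2)$, together with Hausdorff--Young $\|\hat{u}_{0}\|_{L^{m'}}\lesssim\|u_{0}\|_{L^{m}}$; the remaining exponential integral $\int_{|\xi|\le 1}e^{-cr|\xi|^{2(\sigma-\delta)}t}d\xi$, after the rescaling $\eta=t^{1/(2(\sigma-\delta))}\xi$, produces the factor $(1+t)^{-\frac{n}{2(\sigma-\delta)}(\frac{1}{m}-\frac{1}{2})}$.

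The crucial new step handles the $u_{1}$ contribution without the spurious loss $(1+t)^{\delta/(\sigma-\delta)}$ by absorbing the singular weight into the datum,
\begin{equation*}
K_{1}(t,\xi)\hat{u}_{1}(\xi)=\bigl(|\xi|^{2\delta}K_{1}(t,\xi)\bigr)\bigl(|\xi|^{-2\delta}\hat{u}_{1}(\xi)\bigr),
\end{equation*}
so that the first factor is now uniformly bounded by $e^{-c|\xi|^{2(\sigma-\delta)}t}$. The same H\"older argument reduces the estimate to a control of $\||\xi|^{-2\delta}\hat{u}_{1}\|_{L^{m'}}$, which Pitt's inequality supplies in the form
\begin{equation*}
\bigl\||\xi|^{-2\delta}\hat{u}_{1}\bigr\|_{L^{m'}(\mathbb{R}^{n})}\lesssim \bigl\||x|^{2\delta}u_{1}\bigr\|_{L^{m}(\mathbb{R}^{n})},
\end{equation*}
valid exactly under $2\delta<n/m'$, i.e.\ $n>\frac{2m\delta}{m-1}$. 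This yields (\ref{1.6}).

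For (\ref{1.7}) and (\ref{1.8}), the Fourier symbol acquires an additional factor $|\xi|^{\sigma}$ or is replaced by $\partial_{t}K_{\bullet}$; absorbing $|\xi|^{2\delta}$ into $\hat{u}_{1}$ as before gives the multipliers $|\xi|^{\sigma+2\delta}K_{1}\lesssim |\xi|^{\sigma}e^{-c|\xi|^{2(\sigma-\delta)}t}$ and $|\xi|^{2\delta}\partial_{t}K_{1}\lesssim |\xi|^{2(\sigma-\delta)}e^{-c|\xi|^{2(\sigma-\delta)}t}$, so that rescaling supplies the further decay factors $(1+t)^{-\sigma/(2(\sigma-\delta))}$ and $(1+t)^{-1}$ respectively. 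The pure $L^{2}$--$L^{2}$ bounds (\ref{1.9})--(\ref{1.11}) arise as the specialization $m=2$, where Pitt's inequality reduces to its classical $L^{2}$ form $\||\xi|^{-2\delta}\hat{u}_{1}\|_{L^{2}}\lesssim \||x|^{2\delta}u_{1}\|_{L^{2}}$. I expect the principal obstacle to be invoking a Pitt-type inequality in the mixed $L^{m}$--$L^{m'}$ setting under exactly the sharp condition $n>\frac{2m\delta}{m-1}$, together with the careful bookkeeping needed to verify the pointwise bounds on $K_{0}, K_{1}$ across the transition between real and complex characteristic roots.
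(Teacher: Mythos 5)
Your proposal follows essentially the same route as the paper: the same Fourier representation and low/high-frequency splitting, H\"older plus Hausdorff--Young for the $u_{0}$ contribution, and the same key step of transferring the singular factor $|\xi|^{-2\delta}$ onto $\hat{u}_{1}$ via Pitt's inequality under $n>\frac{2m\delta}{m-1}$. The only caveat is that pointwise bounds such as $|\xi|^{2\delta}|\partial_{t}K_{1}|\lesssim|\xi|^{2(\sigma-\delta)}e^{-c|\xi|^{2(\sigma-\delta)}t}$ are not literally valid near $\xi=0$ (the branch $|\xi|^{2\delta}e^{-c|\xi|^{2\delta}t}$ dominates there since $2\delta<2(\sigma-\delta)$); as in the paper, one should keep the two exponential branches separate, each of which yields at least the stated decay after the $L^{\frac{2m}{2-m}}$ norm of the multiplier is computed.
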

\begin{remark}
	One of the nice influences of parabolic like structural damping $(-\Delta)^{\delta}\partial_{t} u$ is that the weight function $|x|^{2\delta}$ depends on the parameter $\delta$. In other words, when $u_{0} \in  H^{\sigma}(\mathbb{R}^{n})\cap L^{m}(\mathbb{R}^{n})$ and $u_{1} \in L^{2\delta,m}(\mathbb{R}^{n})\cap L^{2}(\mathbb{R}^{n})$ this hints us that the above decay estimates are really better than those in Proposition \ref{Linearestimates0} and are the same as those of the following diffusion equation, see for instance \cite{D'AbbiccoEbertdiffusion}:
	\begin{equation*}
	\partial_{t}v+(-\Delta)^{\sigma-\delta}v=0, \ v(0,x)=v_{0}(x),
	\end{equation*}
	with suitable initial data $v_{0}$.
\end{remark}
\begin{remark}
	The application of Pitt inequality gives us not only a better decay estimates, but also reduce the range of space dimension $n$ as follows $\frac{2m\delta}{m-1}<n$. This bound is less than $\frac{4m\delta}{2-m}$ in Proposition \ref{Linearestimates0} if and only if $m\in \left[ \frac{4}{3}, 2\right) $. Whereas if $m\in\left( 1, \frac{4}{3}\right] $ then $\frac{4m\delta}{2-m}\leq \frac{2m\delta}{m-1}$.
\end{remark}
\begin{remark}
	We have seen that the admissible data spaces in (\ref{1.6})-(\ref{1.8}) are taken as follows: 
	$$u_{0} \in H^{\sigma}(\mathbb{R}^{n})\cap L^{m}(\mathbb{R}^{n}), \ \ u_{1} \in L^{2}(\mathbb{R}^{n})\cap L^{2\delta,m}(\mathbb{R}^{n}), \ \ m\in(1,2].$$
	But, we can also obtain another better decay estimates with respect to $u_{0}$ if we choose the initial data as
	$$u_{0} \in H^{\sigma}(\mathbb{R}^{n})\cap L^{2\delta,m}(\mathbb{R}^{n}), \ \ u_{1} \in L^{2}(\mathbb{R}^{n})\cap L^{2\delta,m}(\mathbb{R}^{n}), \ \ m\in(1,2],$$
	which motivates us to write the following corollary.
\end{remark}
\begin{corollary}
	Let $\sigma\geq 1$, $2\delta\in (0,\sigma)$ in (\ref{1.1}). Let $m\in(1,2)$ and $n>\frac{2m\delta}{m-1}$. The solutions $u$ to \eqref{1.1} satisfy the following $(L^{m}\cap L^{2}) -L^{2}$ estimates
	\begin{align*}
	\|u(t,\cdot)\|_{L^{2}(\mathbb{R}^{n})}&\lesssim(1+t)^{-\frac{n}{2(\sigma-\delta)}\left(\frac{1}{m}-\frac{1}{2}\right)-\frac{\delta}{\sigma-\delta}} \|u_{0}\|_{L^{2\delta,m}(\mathbb{R}^{n})\cap L^{2}(\mathbb{R}^{n})}\nonumber \\
	&\hspace{4cm} 
	+(1+t)^{-\frac{n}{2(\sigma-\delta)}\left(\frac{1}{m}-\frac{1}{2}\right)}\|u_{1}\| _{L^{2\delta,m}(\mathbb{R}^{n})\cap L^{2}(\mathbb{R}^{n})},
	\end{align*}
	\begin{align*}
	\|(-\Delta)^{\sigma/2}u(t,\cdot)\|_{L^{2}(\mathbb{R}^{n})}&\lesssim(1+t)^{-\frac{n}{2(\sigma-\delta)}\left(\frac{1}{m}-\frac{1}{2}\right)-\frac{\sigma+2\delta}{2(\sigma-\delta)}} \|u_{0}\|_{L^{2\delta,m}(\mathbb{R}^{n})\cap H^{\sigma}(\mathbb{R}^{n})}\nonumber \\
	&\hspace{2cm} 
	+(1+t)^{-\frac{n}{2(\sigma-\delta)}\left(\frac{1}{m}-\frac{1}{2}\right)-\frac{\sigma}{2(\sigma-\delta)}}\|u_{1}\| _{L^{2\delta,m}(\mathbb{R}^{n})\cap         L^{2}(\mathbb{R}^{n})},
	\end{align*}
	\begin{align*}
	\|\partial_{t}u(t,\cdot)\|_{L^{2}(\mathbb{R}^{n})}&\lesssim(1+t)^{-\frac{n}{2(\sigma-\delta)}\left(\frac{1}{m}-\frac{1}{2}\right)-1-\frac{\delta}{\sigma-\delta}} \|u_{0}\|_{L^{2\delta,m}(\mathbb{R}^{n})\cap H^{\sigma}(\mathbb{R}^{n})}\nonumber \\
	&\hspace{4cm} 
	+(1+t)^{-\frac{n}{2(\sigma-\delta)}\left(\frac{1}{m}-\frac{1}{2}\right)-1}\|u_{1}\| _{L^{2\delta,m}(\mathbb{R}^{n})\cap L^{2}(\mathbb{R}^{n})}.
	\end{align*}
	In particular, the following $L^{2}-L^{2}$ estimates hold:
	\begin{align*}
	\|u(t,\cdot)\|_{L^{2}(\mathbb{R}^{n})} &\lesssim (1+t)^{-\frac{\delta}{\sigma-\delta}}\|u_{0}\|_{L^{2\delta,2}(\mathbb{R}^{n})\cap L^{2}(\mathbb{R}^{n})}+\|u_{1}\|_{L^{2\delta,2}(\mathbb{R}^{n})\cap L^{2}(\mathbb{R}^{n})},
	\\ 
	\|(-\Delta)^{\sigma/2}u(t,\cdot)\|_{L^{2}(\mathbb{R}^{n})} &\lesssim(1+t)^{-\frac{\sigma+2\delta}{2(\sigma-\delta)}}  \|u_{0}\|_{L^{2\delta,2}(\mathbb{R}^{n})\cap H^{\sigma}(\mathbb{R}^{n})}+(1+t)^{-\frac{\sigma}{2(\sigma-\delta)}}\|u_{1}\|_{L^{2\delta,2}(\mathbb{R}^{n})\cap L^{2}(\mathbb{R}^{n})},
	\\
	\|\partial_{t}u(t,\cdot)\|_{L^{2}(\mathbb{R}^{n})} &\lesssim(1+t)^{-1-\frac{\delta}{\sigma-\delta}}  \|u_{0}\|_{L^{2\delta,2}(\mathbb{R}^{n})\cap H^{\sigma}(\mathbb{R}^{n})}+(1+t)^{-1}\|u_{1}\|_{L^{2\delta,2}(\mathbb{R}^{n})\cap L^{2}(\mathbb{R}^{n})}.
	\end{align*}
\end{corollary}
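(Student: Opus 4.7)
The plan is to reuse the Fourier-analytic template that produced Theorem \ref{Linearestimates1}, this time applying the Pitt inequality to the initial position $u_0$ in addition to $u_1$. Applying the Fourier transform to \eqref{1.1} reduces it to a second-order ODE in $t$ whose characteristic roots $\lambda_\pm(|\xi|)$ are real and negative (since $2\delta<\sigma$), with low-frequency asymptotics $\lambda_+\approx -|\xi|^{2(\sigma-\delta)}$ and $\lambda_-\approx -|\xi|^{2\delta}$. This yields the representation
\[
\widehat{u}(t,\xi)=K_0(t,\xi)\,\widehat{u}_0(\xi)+K_1(t,\xi)\,\widehat{u}_1(\xi),
\]
with $|K_0(t,\xi)|\lesssim e^{-c|\xi|^{2(\sigma-\delta)}t}$ and $|K_1(t,\xi)|\lesssim|\xi|^{-2\delta}e^{-c|\xi|^{2(\sigma-\delta)}t}$ in the low-frequency zone $\{|\xi|\leq 1\}$, and exponential decay for $|\xi|>1$, exactly as derived in the proof of Theorem \ref{Linearestimates1}.

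Splitting $\|u(t,\cdot)\|_{L^2}^2$ by Plancherel into low- and high-frequency integrals, the high-frequency part is absorbed into the $L^2$-norms of $u_0$ and $u_1$ as usual. For the low-frequency contribution of $u_0$ I use the factorization
\[
e^{-c|\xi|^{2(\sigma-\delta)}t}|\widehat{u}_0(\xi)|=|\xi|^{2\delta}\,e^{-c|\xi|^{2(\sigma-\delta)}t}\cdot|\xi|^{-2\delta}|\widehat{u}_0(\xi)|,
\]
then apply H\"older's inequality with exponents $m'/2$ and $m'/(m'-2)$, where $m'=m/(m-1)$. The Pitt inequality
\[
\bigl\||\xi|^{-2\delta}\widehat{u}_0\bigr\|_{L^{m'}(\mathbb{R}^n)}\lesssim\bigl\||x|^{2\delta}u_0\bigr\|_{L^m(\mathbb{R}^n)}
\]
bounds the first factor; its admissibility condition $2\delta<n/m'$ rearranges to exactly $n>\frac{2m\delta}{m-1}$, the hypothesis of the corollary. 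The remaining factor $\bigl\||\xi|^{4\delta}e^{-cst|\xi|^{2(\sigma-\delta)}}\bigr\|_{L^{m'/(m'-2)}}$ is evaluated by the scaling change of variables $\eta=t^{1/(2(\sigma-\delta))}\xi$, producing the time factor $t^{-2\delta/(\sigma-\delta)-n(1/m-1/2)/(\sigma-\delta)}$; taking square roots recovers the additional decay $(1+t)^{-\delta/(\sigma-\delta)}$ announced in the first display of the corollary.

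The estimates for $(-\Delta)^{\sigma/2}u$ and $\partial_t u$ follow by inserting the multipliers $|\xi|^{\sigma}$ and $\lambda_\pm$ into the same computation, with the $u_1$-contributions unchanged from Theorem \ref{Linearestimates1}. The $L^2$--$L^2$ statements are the specialization $m=2$, in which Pitt reduces to the classical Beckner inequality $\||\xi|^{-2\delta}\widehat{f}\|_{L^2}\lesssim\||x|^{2\delta}f\|_{L^2}$, valid for $2\delta<n/2$. The only subtle point, and the main obstacle, is the verification that the chosen weight $2\delta$ and exponent $m$ lie inside the admissible range of the Pitt inequality at the relevant endpoint; the bound $n>\frac{2m\delta}{m-1}$ is tailored exactly to ensure this, after which the remainder is a line-by-line adaptation of the proof of \eqref{1.6}--\eqref{1.8}.
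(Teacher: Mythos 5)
Your proposal is correct and follows essentially the same route the paper intends: the corollary is obtained by rerunning the proof of Theorem \ref{Linearestimates1} with the artificial factorization $\hat{u}_{0}=|\xi|^{2\delta}\cdot|\xi|^{-2\delta}\hat{u}_{0}$ in the low-frequency zone, so that Pitt's inequality (with $s_{1}=s_{2}=2\delta$, $r_{1}=m'$, $r_{2}=m$, admissible precisely when $n>\frac{2m\delta}{m-1}$) handles the $u_{0}$-term as well, and the extra multiplier $|\xi|^{2\delta}$ yields the additional factor $(1+t)^{-\frac{\delta}{\sigma-\delta}}$ after the scaling argument. Your exponent bookkeeping for all three estimates and for the $L^{2}$--$L^{2}$ case checks out, matching the paper's (unwritten but clearly indicated) argument.
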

\begin{remark}
	We can also use the Pitt inequality to obtain a better $(L^{m}\cap L^{2}) -L^{2}$ estimates for the solution and some of its derivatives to the homogeneous linear $\sigma$-evolution equations with structural damping, where $\delta \in \left(\frac{\sigma}{2}, \sigma \right)$. In this case, the space dimension $n$ will satisfies the condition $n>\frac{m\sigma}{m-1}$ instead of $n>\frac{2m\sigma}{2-m}$, see for instance \cite{PhamKainaneReissig}, \cite{Daosystem}. Moreover, the initial data $(u_{0}, u_{1})$ are chosen as follows:
	$$u_{0} \in H^{2\delta}(\mathbb{R}^{n})\cap L^{\sigma,m}(\mathbb{R}^{n}), \ \ u_{1} \in L^{2}(\mathbb{R}^{n})\cap L^{\sigma,m}(\mathbb{R}^{n}), \ \ m\in(1,2].$$
\end{remark}
The remaining part of this paper is organized as follows: In Section \ref{Preliminaries} we recall two important tools. Section \ref{proof} is devoted to prove Theorem \ref{Linearestimates1} and we closed it by mentioning some of our future studies. 
\section{ Preliminaries}\label{Preliminaries} 
The following Pitt inequality is very important and it is a generalization of the well-known Hausdorff-Young inequality. It reads as follows.
\begin{lemma}\cite{BenedettoHeinig, DeCarliGorbachevTikhonov}\label{Pitt's inequality}
	Let $1<r_{2}\leq r_{1}<\infty$, $s_{1}\geq 0$, $s_{2}\geq 0$ and $n\geq 1$. There is $c>0$ such that  
	\begin{equation}\label{2.1}
	\||\xi|^{-s_{1}}\mathcal{F}(f)\|_{L^{r_{1}}(\mathbb{R}^{n})}\leq c \||x|^{s_{2}}f\|_{L^{r_{2}}(\mathbb{R}^{n})} \ \ \text{for all} \ f\in \mathcal{S}(\mathbb{R}^{n}). 
	\end{equation}
	where 
	$$r_{1}s_{1}<n,\   \ \ \frac{r_{2}s_{2}}{r_{2}-1}<n , \ \ s_{1}=s_{2}+n\left(\frac{1}{r_{1}}+\frac{1}{r_{2}}-1\right).$$ 
\end{lemma}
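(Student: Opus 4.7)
By density of $\mathcal{S}(\mathbb{R}^{n})$ in the relevant weighted spaces, we may assume $f\in\mathcal{S}(\mathbb{R}^{n})$. The strategy is to realize Pitt's inequality as a composition of two classical ingredients: the Hausdorff--Young inequality and the two-weight Stein--Weiss estimate for the Riesz potential $I_{s_{1}}g(x)=c_{n,s_{1}}\int_{\mathbb{R}^{n}}|x-y|^{s_{1}-n}g(y)\,dy$. Since $|\xi|^{-s_{1}}$ is, up to a universal constant, the Fourier multiplier of $I_{s_{1}}$, one has $\||\xi|^{-s_{1}}\mathcal{F}(f)\|_{L^{r_{1}}}=c_{n,s_{1}}\|\mathcal{F}(I_{s_{1}}f)\|_{L^{r_{1}}}$, so the desired bound is equivalent to controlling $\mathcal{F}(I_{s_{1}}f)$ in $L^{r_{1}}$ by $\||x|^{s_{2}}f\|_{L^{r_{2}}}$.

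In the regime $r_{1}\in[2,\infty)$, Hausdorff--Young yields $\|\mathcal{F}(I_{s_{1}}f)\|_{L^{r_{1}}}\lesssim\|I_{s_{1}}f\|_{L^{r_{1}'}}$, where $1/r_{1}+1/r_{1}'=1$, and the remaining one-weight estimate $\|I_{s_{1}}f\|_{L^{r_{1}'}}\lesssim\||x|^{s_{2}}f\|_{L^{r_{2}}}$ is exactly the Stein--Weiss weighted fractional integration theorem, valid under the conditions $0\leq s_{1}<n/r_{1}$, $0\leq s_{2}<n/r_{2}'$, $r_{2}\leq r_{1}'$, and the scaling identity $1/r_{2}-1/r_{1}'=(s_{1}-s_{2})/n$. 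Substituting $1/r_{1}'=1-1/r_{1}$ turns this identity into the hypothesis $s_{1}=s_{2}+n(1/r_{1}+1/r_{2}-1)$, while the two dimensional bounds reduce to $r_{1}s_{1}<n$ and $r_{2}s_{2}/(r_{2}-1)<n$.

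For the complementary range $1<r_{2}\leq r_{1}<2$, I would dualize. By Parseval, for each test function $h\in\mathcal{S}(\mathbb{R}^{n})$,
\[
\int_{\mathbb{R}^{n}} |\xi|^{-s_{1}}\mathcal{F}(f)(\xi)\,\overline{h(\xi)}\,d\xi \;=\; c\int_{\mathbb{R}^{n}} f(x)\,\overline{I_{s_{1}}\mathcal{F}^{-1}(h)(x)}\,dx.
\]
Factoring $f=|x|^{s_{2}}f\cdot|x|^{-s_{2}}$ and applying H\"older with exponents $(r_{2},r_{2}')$ reduces the problem to the dual Stein--Weiss bound $\||x|^{-s_{2}}I_{s_{1}}\mathcal{F}^{-1}(h)\|_{L^{r_{2}'}}\lesssim\|\mathcal{F}^{-1}(h)\|_{L^{r_{1}}}$, and since $r_{1}'\geq 2$ Hausdorff--Young bounds $\|\mathcal{F}^{-1}(h)\|_{L^{r_{1}}}$ by $\|h\|_{L^{r_{1}'}}$. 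Taking the supremum over $h$ with $\|h\|_{L^{r_{1}'}}\leq 1$ delivers \eqref{2.1} in this range as well.

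The substantive obstacle is the Stein--Weiss inequality itself. I would prove it via the classical dyadic argument: decompose $\mathbb{R}^{n}=\bigcup_{k\in\mathbb{Z}}A_{k}$ with $A_{k}=\{2^{k}\leq|x|<2^{k+1}\}$, apply the unweighted Hardy--Littlewood--Sobolev inequality to the pieces $f\chi_{A_{k}}$, and sum the off-diagonal contributions using the pointwise bound $|x-y|^{s_{1}-n}\lesssim 2^{k(s_{1}-n)}$ for $x\in A_{j}$, $y\in A_{k}$ with $|j-k|$ large. The sharp endpoint conditions $r_{1}s_{1}<n$ and $r_{2}s_{2}/(r_{2}-1)<n$ are precisely those ensuring convergence of the resulting geometric series, while the scaling identity balances the powers of $2^{k}$ on the two sides; keeping track of the exact admissible parameter range so that every step is simultaneously valid is the principal bookkeeping difficulty.
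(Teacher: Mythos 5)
The paper offers no proof of this lemma: it is quoted verbatim from the cited references \cite{BenedettoHeinig, DeCarliGorbachevTikhonov}, so there is no internal argument to compare yours against. Judged on its own terms, your reduction to Hausdorff--Young plus Stein--Weiss is a natural and classical idea, and your scaling bookkeeping (the identity $1/r_{2}-1/r_{1}'=(s_{1}-s_{2})/n$ matching $s_{1}=s_{2}+n(1/r_{1}+1/r_{2}-1)$) is correct. The problem is that the two regimes you treat do not exhaust the hypotheses of the lemma, and the case that falls through is not a degenerate one.

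Concretely: in your first regime ($r_{1}\geq 2$) the Stein--Weiss step $\|I_{s_{1}}f\|_{L^{r_{1}'}}\lesssim\||x|^{s_{2}}f\|_{L^{r_{2}}}$ carries the requirement $r_{2}\leq r_{1}'$, which you list among the conditions but which is strictly stronger than the lemma's hypothesis $r_{2}\leq r_{1}$ as soon as $r_{1}>2$ (since then $r_{1}'<2$). Your second regime is explicitly restricted to $r_{1}<2$. Hence the entire region $r_{1}>2$ with $r_{1}'<r_{2}\leq r_{1}$ is uncovered, and it contains genuine, classical instances of the lemma: for example $r_{1}=r_{2}=q>2$, $s_{1}=0$, $s_{2}=n(1-2/q)$ is the Hardy--Littlewood inequality $\|\mathcal{F}(f)\|_{L^{q}}\lesssim\||x|^{n(1-2/q)}f\|_{L^{q}}$. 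One checks it cannot be recovered by your scheme: Hausdorff--Young reduces it to $\|f\|_{L^{q'}}\lesssim\||x|^{n(1-2/q)}f\|_{L^{q}}$, a weighted bound with target exponent $q'$ smaller than the source exponent $q$; the $q<p$ extension of Stein--Weiss requires the strict inequality $\alpha+\beta>n(1/q-1/p)$, which in your parameters reads $s_{1}>0$ and fails exactly at this endpoint (a direct H\"older attempt produces the divergent factor $\||x|^{-s_{2}}\|_{L^{q}}$). So you must either add a third argument for $r_{1}>2$, $r_{2}>r_{1}'$ --- the standard routes in the cited references are rearrangement combined with the weighted Hardy inequality, or Stein--Weiss interpolation with change of measure between the Plancherel and Hardy--Littlewood endpoints --- or restrict the claimed range. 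As written, your proof establishes the inequality only when $r_{1}\leq 2$ or $r_{2}\leq r_{1}'$.
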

\begin{remark}
	When $s_{1}=s_{2}=0$ and $\frac{1}{r_{1}}+\frac{1}{r_{2}}=1$, the Pitt inequality becomes the standard Hausdorff-Young inequality:
	\begin{equation}\label{2.2}
	\|\mathcal{F}(f)\|_{L^{r_{1}}(\mathbb{R}^{n})}\lesssim \|f\|_{L^{r_{2}}(\mathbb{R}^{n})} \ \ \text{for all} \ f\in \mathcal{S}(\mathbb{R}^{n}).
	\end{equation}
\end{remark}
A direct application of H\"{o}lder's inequality gives.
\begin{lemma}\cite{IkehataTakeda}\label{Productinequality}
	Let $n\geq 1$, $1\leq m \leq 2$ and $\frac{1}{m}+\frac{1}{m'}=1$. Then it holds that
	\begin{equation}
	\|fg\|_{L^{2}(\mathbb{R}^{n})}\leq \|f\|_{L^{\frac{2m}{2-m}}(\mathbb{R}^{n})}\|g\|_{L^{m'}(\mathbb{R}^{n})}.
	\end{equation}
\end{lemma}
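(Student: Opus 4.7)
The statement is a straightforward Hölder-type inequality, so the plan is simply to exhibit the right pair of conjugate exponents and apply the classical Hölder inequality in the form $\|fg\|_{L^2} \leq \|f\|_{L^p}\|g\|_{L^q}$ whenever $\tfrac{1}{p}+\tfrac{1}{q}=\tfrac{1}{2}$.

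First I would set $p = \frac{2m}{2-m}$ and $q = m' = \frac{m}{m-1}$, and verify the admissibility condition
\begin{equation*}
\frac{1}{p}+\frac{1}{q}=\frac{2-m}{2m}+\frac{m-1}{m}=\frac{1}{m}-\frac{1}{2}+1-\frac{1}{m}=\frac{1}{2},
\end{equation*}
which is the only algebraic check needed. Since $m\in[1,2]$ gives $p\in[2,\infty]$ and $q\in[2,\infty]$, both exponents are in the valid range for Hölder's inequality on $\mathbb{R}^n$. Then Hölder's inequality with these exponents yields directly
\begin{equation*}
\|fg\|_{L^2(\mathbb{R}^n)} \leq \|f\|_{L^p(\mathbb{R}^n)}\|g\|_{L^q(\mathbb{R}^n)} = \|f\|_{L^{\frac{2m}{2-m}}(\mathbb{R}^n)}\|g\|_{L^{m'}(\mathbb{R}^n)}.
\end{equation*}

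The only mildly delicate point is the handling of the endpoints: at $m=2$ the exponent $\frac{2m}{2-m}$ must be interpreted as $\infty$ (with $m'=2$), and at $m=1$ one has $\frac{2m}{2-m}=2$ with $m'=\infty$; in both cases the inequality reduces to the standard Hölder bound with an $L^\infty$ factor pulled out by its essential supremum, so no separate argument is required. There is no genuine obstacle here — the entire content of the lemma is the verification of the exponent identity above — which is presumably why the authors preface the statement with the phrase \emph{a direct application of Hölder's inequality gives}.
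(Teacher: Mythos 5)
Your proof is correct and takes essentially the same route as the paper: the paper writes $\|fg\|_{L^{2}}^{2}=\|f^{2}g^{2}\|_{L^{1}}$ and applies the classical H\"older inequality with exponents $q=\frac{m}{2-m}$ and $\frac{m'}{2}$, which is exactly the standard derivation of the generalized H\"older inequality $\frac{1}{p}+\frac{1}{q}=\frac{1}{2}$ that you invoke directly. Your explicit treatment of the endpoint cases $m=1$ and $m=2$ is a small addition the paper omits, but nothing of substance differs.
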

\begin{proof}
	We have by the H\"{o}lder's inequality $$\|fg\|^{2}_{L^{2}(\mathbb{R}^{n})}=\|f^{2}g^{2}\|_{L^{1}(\mathbb{R}^{n})}\leq\|f^{2}\|_{L^{q}(\mathbb{R}^{n})}\|g^{2}\|_{L^{m'/2}(\mathbb{R}^{n})}$$
	provided that $$\frac{1}{q}+\frac{1}{\frac{m'}{2}}=1\ \ \ i.e, \ \ q=\frac{m}{2-m}.$$
	Hence, we deduce 
	$$\|fg\|_{L^{2}(\mathbb{R}^{n})}\leq \|f\|_{L^{2q}(\mathbb{R}^{n})}\|g\|_{L^{m'}(\mathbb{R}^{n})}.$$
\end{proof}
Now, we have all the basic tools to prove our result.
\section{Proof of Theorem \ref{Linearestimates1}}\label{proof}
First, we need to know the formal representation of solutions. By
applying the partial Fourier transform $\mathcal{F}u(t,x):=\hat{u}(t,\xi)$ to (\ref{1.1}) we obtain the following second order differential equation :
\begin{equation*}
\partial^{2}_{t}\hat{u}+|\xi|^{2\delta}\partial_{t}\hat{u}+|\xi|^{2\sigma}\hat{u}=0,\  
\hat{u}(0,\xi)=\hat{u}_{0}(\xi), \ \partial_{t}\hat{u}(0,\xi)=\hat{u}_{1}(\xi).
\end{equation*}
A simple calculation leads to the formal solution as follows:
\begin{equation}\label{3.1}
\partial_{t}^{i}|\xi|^{a}\hat{u}(t,\xi)=|\xi|^{a}\frac{\lambda_{1}\lambda_{2}^{i}e^{\lambda_{2}t}-\lambda_{2}\lambda_{1}^{i}e^{\lambda_{1}t}}{\lambda_{1}-\lambda_{2}}\hat{u}_{0}(\xi)+|\xi|^{a}\frac{\lambda_{1}^{i}e^{\lambda_{1}t}-\lambda_{2}^{i}e^{\lambda_{2}t}}{\lambda_{1}-\lambda_{2}}\hat{u}_{1}(\xi)
\end{equation}
where $i \in \{0,1\}$, $a\geq 0$. The roots $\lambda_{1}$, $\lambda_{2}$ of the characteristic equation are given by 
$$\lambda_{1,2}:=\lambda_{1,2}(\xi)=\frac{-|\xi|^{2\delta}\pm \sqrt{|\xi|^{4\delta}-4|\xi|^{2\sigma}}}{2}$$ and behave like 
\begin{equation}\label{3.2}
\left\lbrace \begin{matrix}
\lambda_{1}\approx -|\xi|^{2(\sigma-\delta)},\  \lambda_{2}\approx-|\xi|^{2\delta},\  \lambda_{1}-\lambda_{2}\approx|\xi|^{2\delta} \hfill& { as \ \ |\xi| \to 0},
&\cr 
\\
\Re\lambda_{1,2}=-\frac{|\xi|^{2\delta}}{2},\  \Im\lambda_{1,2}=\pm|\xi|^{\sigma},\ \ \Im(\lambda_{1}-\lambda_{2})\approx  |\xi|^{\sigma}\hfill & { as \ \ |\xi| \to \infty}.
\end{matrix}\right.
\end{equation}  
\begin{proof}
	In order to prove (\ref{1.6})-(\ref{1.8}), we derive $L^{m}-L^{2}$ estimates for small frequencies and $L^{2}-L^{2}$ estimates for large frequencies. Here, we note that the linear estimates for small frequencies give us the type of decay rate which is polynomial, while the linear estimates for small frequencies give us an exponential decay together with a suitable regularities of initial data. This explains why we use the following decomposition of the formal solution (\ref{3.1}):
	$$\partial_{t}^{i}|\xi|^{a}\hat{u}(t,\xi) =\partial_{t}^{i}|\xi|^{a}\chi(\xi)\hat{u}(t,\xi)+\partial_{t}^{i}|\xi|^{a}(1-\chi(\xi))\hat{u}(t,\xi),$$
	where $\chi$ is a smooth cut-off function satisfies $\chi(\xi)=1$ when $|\xi|\to 0$ and $1-\chi(\xi)=1$ when $|\xi|\to \infty$. Using the asymptotic behavior of the characteristics roots (\ref{3.2}) we may estimates the low frequency part of the solution as follows :
	\begin{align*}
	\|\partial_{t}^{i}|\xi|^{a}\chi(\xi)\hat{u}(t,\xi)\|_{L^{2}}&\lesssim\left\| |\xi|^{a}\frac{\lambda_{1}\lambda_{2}^{i}e^{\lambda_{2}t}-\lambda_{2}\lambda_{1}^{i}e^{\lambda_{1}t}}{\lambda_{1}-\lambda_{2}}\chi(\xi)\hat{u}_{0}(\xi)\right\|_{L^{2}}\nonumber\\
	&\hspace{4cm}
	+\left\||\xi|^{a}\frac{\lambda_{1}^{i}e^{\lambda_{1}t}-\lambda_{2}^{i}e^{\lambda_{2}t}}{\lambda_{1}-\lambda_{2}}\chi(\xi)\hat{u}_{1}(\xi)\right\|_{L^{2}}\nonumber \\
	&\hspace{-2cm} 
	\lesssim \left\||\xi|^{a+2(\sigma+\delta(i-2))}e^{-|\xi|^{2\delta}t}\chi(\xi)\hat{u}_{0}(\xi)\right\|_{L^{2}}+\left\||\xi|^{a+2i(\sigma-\delta)}e^{-|\xi|^{2(\sigma-\delta)}t}\chi(\xi)\hat{u}_{0}(\xi)\right\|_{L^{2}}\nonumber
	\\
	&\hspace{-2cm} 
	+\left\||\xi|^{a+2i(\sigma-\delta)}e^{-|\xi|^{2(\sigma-\delta)}t}|\xi|^{-2\delta}\chi(\xi)\hat{u}_{1}(\xi)\right\|_{L^{2}}+\left\||\xi|^{a+2i\delta}e^{-|\xi|^{2\delta}t}|\xi|^{-2\delta}\chi(\xi)\hat{u}_{1}(\xi)\right\|_{L^{2}}.\nonumber
	\end{align*} 
	Now, we use the Hausdorff-Young inequality from (\ref{2.2}) we have 
	$$
	\left\||\xi|^{a+2(\sigma+\delta(i-2))}e^{-|\xi|^{2\delta}t}\chi(\xi)\hat{u}_{0}(\xi)\right\|_{L^{2}}\lesssim \left\| |\xi|^{a+2(\sigma+\delta(i-2))}e^{-|\xi|^{2\delta}t}\chi(\xi)\right\| _{L^{\frac{2m}{2-m}}}\|\hat{u}_{0}(\xi)\|_{L^{m'}}. 
	$$
	To estimate the middle norm we proceed as follows:
	$$\left\| |\xi|^{a+2(\sigma+\delta(i-2))}e^{-|\xi|^{2\delta}t}\chi(\xi)\right\| _{L^{\frac{2m}{2-m}}}=\left\| |\xi|^{a+2(\sigma+\delta(i-2))}e^{-(1+t)|\xi|^{2\delta}}e^{|\xi|^{2\delta}}\chi(\xi)\right\|_{L^{\frac{2m}{2-m}}}$$
	$$
	\lesssim \left\| |\xi|^{a+2(\sigma+\delta(i-2))}e^{-(1+t)|\xi|^{2\delta}}\chi(\xi)\right\| _{L^{\frac{2m}{2-m}}}.
	$$
	We have for $R=|\xi|$ 
	$$\left\| |\xi|^{a+2(\sigma+\delta(i-2))}e^{-(1+t)|\xi|^{2\delta}}\chi(\xi)\right\|^{\frac{2m}{2-m}} _{L^{\frac{2m}{2-m}}}=\int_{\mathbb{R}^{n}}|\xi|^{\left(a+2(\sigma+\delta(i-2))\right) \frac{2m}{2-m}}e^{-\frac{2m}{2-m}(1+t)|\xi|^{2\delta}}d\xi$$
	$$\lesssim \int_{0}^{\infty}R^{\left(a+2(\sigma+\delta(i-2))\right) \frac{2m}{2-m}+n-1}e^{-\frac{2m}{2-m}(1+t)R^{2\delta}}dR.$$
	By taking the change of variable $z=(\frac{2m}{2-m}(1+t))^{1/2\delta}R$ we obtain
	$$ \int_{0}^{\infty}R^{\left(a+2(\sigma+\delta(i-2))\right) \frac{2m}{2-m}+n-1}e^{-\frac{2m}{2-m}(1+t)R^{2\delta}}dR\lesssim(1+t)^{-\frac{n}{2\delta}\left(\frac{1}{m}-\frac{1}{2}\right)-\frac{a+2(\sigma+\delta(i-2))}{2\delta}}.$$
	Hence we conclude
	$$\left\||\xi|^{a+2(\sigma+\delta(i-2))}e^{-|\xi|^{2\delta}t}\chi(\xi)\hat{u}_{0}(\xi)\right\|_{L^{2}}\lesssim (1+t)^{-\frac{n}{2\delta}\left(\frac{1}{m}-\frac{1}{2}\right)-\frac{a+2(\sigma+\delta(i-2))}{2\delta}}\|u_{0}\|_{L^{m}},$$  
	similarly we can also prove	
	$$
	\left\||\xi|^{a+2i(\sigma-\delta)}e^{-|\xi|^{2(\sigma-\delta)}t}\hat{u}_{0}(\xi)\right\|_{L^{2}}\lesssim (1+t)^{-\frac{n}{2(\sigma-\delta)}\left(\frac{1}{m}-\frac{1}{2}\right)-\frac{a+2i(\sigma-\delta)}{2(\sigma-\delta)}}\|u_{0}\|_{L^{m}}.
	$$
	For the remaining norms with respect to $u_{1}$, we use the Pitt inequality instead of the Hausdorff-Young inequality, this is the crucial point	
	$$
	\left\||\xi|^{a+2i(\sigma-\delta)}e^{-|\xi|^{2(\sigma-\delta)}t}|\xi|^{-2\delta}\hat{u}_{1}(\xi)\right\|_{L^{2}} \lesssim \left\| |\xi|^{a+2i(\sigma-\delta)}e^{-|\xi|^{2(\sigma-\delta)}t}\right\| _{L^{\frac{2m}{2-m}}}\||\xi|^{-2\delta}\hat{u}_{1}(\xi)\|_{L^{m'}}.
	$$
	$$
	\lesssim(1+t)^{-\frac{n}{2(\sigma-\delta)}\left(\frac{1}{m}-\frac{1}{2}\right)-\frac{a+2i(\sigma-\delta)}{2(\sigma-\delta)}}\||x|^{2\delta}u_{1}\|_{L^{m}},
	$$
	and
	$$
	\left\||\xi|^{a+2i\delta}e^{-|\xi|^{2\delta}t}|\xi|^{-2\delta}\hat{u}_{1}(\xi)\right\|_{L^{2}}\lesssim (1+t)^{-\frac{n}{2\delta}\left(\frac{1}{m}-\frac{1}{2}\right)-\frac{a+2i\delta}{2\delta}}\||x|^{2\delta}u_{1}\|_{L^{m}}.
	$$
	Here the above last estimates hold if and only if from the Pitt inequality: $$\frac{2\delta m}{m-1}<n \ \ and\ \  \frac{1}{m}+\frac{1}{m'}, \ \ m\in(1,2).$$
	Finally, by the formula of Parseval-Plancherel we can conclude the desired estimates for small frequencies
	$$
	\|\partial_{t}^{i}(-\Delta)^{a/2}\chi(\cdot)u(t,\cdot)\|_{L^{2}}\lesssim(1+t)^{-\frac{1}{2(\sigma-\delta)}\left(\frac{1}{m}-\frac{1}{2}\right)-\frac{a+2i(\sigma-\delta)}{2(\sigma-\delta)}}\left(\|u_{0}\|_{L^{m}}+ \||x|^{2\delta}u_{1}\|_{L^{m}}\right). 
	$$
	For large frequencies, we can obtain the same $L^{2}-L^{2}$ estimates as in \cite{Daosystem}:
	$$\|\partial_{t}^{i}(-\Delta)^{a/2}(1-\chi(\cdot))u(t,\cdot)\|_{L^{2}}\lesssim e^{-t}\left(  \|u_{0}\|_{H^{a+i\sigma}}+\|u_{1}\|_{H^{\max\{a+(i-1)\sigma,0\}}}\right) .$$
	For the $L^{2}-L^{2}$ linear estimates in small frequencies we can proceed as above, this concludes the proof of Theorem \ref{Linearestimates1}.  		
\end{proof}
\subsection{Future applications}
We strongly believe that when the linear estimates become better, the critical exponent is improved. So, in future work, it is interesting to apply the Theorem \ref{Linearestimates1} and study the corresponding semi-linear Cauchy problem
\begin{equation}
\partial^{2}_{t}u+(-\Delta)^{\sigma} u+(-\Delta)^{\delta}\partial_{t}u=f(t,u), \ u(0,x)=u_{0}(x), \ \ \partial_{t}u(0,x)=u_{1}(x),
\end{equation}
where $$f(t,u)=|u|^{p}, \ \  f(t,u)=\int_{0}^{t}(t-s)^{-\gamma}|u|^{p}ds, \ \ \ \ p>1,  \ \ \gamma\in(0,1).$$
The fact that the initial data $u_{1}$ lives in weighted space this motivate us to look for the application of the classical Caffarelli-Kohn-Nirenberg inequality proved in \cite{CaffarelliKohnNirenberg} which is a generalization of the well-know Gagliardo-Nirenberg inequality.

% References should be collected at the end of the paper and numbered in  %
% alphabetical order of the authors' names. Titles of journals should be  %
% abbreviated as in Mathematical Reviews. The preferred style is shown in %
% the examples below.                                                     %

\vskip+0.5cm

\noindent \textbf{Author address:}

\vskip+0.3cm

\noindent \textbf{Khaldi Said}

Laboratory of Analysis and Control of PDEs, Djillali Liabes University,  P.O. Box 89, Sidi-Bel-Abb\`{e}s 22000, Algeria

{\itshape E-mails:} \texttt{saidookhaldi@gmail.com, said.khaldi@univ-sba.dz}


\begin{thebibliography}{99}
\fontsize{9}{11pt}\selectfont
\bibitem{BenedettoHeinig}
\newblock  J. J.  Benedetto and H. P. Heinig, 
\newblock Weighted Fourier Inequalities:
New Proofs and Generalizations,
\newblock \emph{J. Four. Anal.  App}, \textbf{9} (2003), 1--37.
\vspace{-5pt}
\bibitem{DeCarliGorbachevTikhonov}
\newblock  L. De Carli and D. Gorbachev and S. Tikhonov,
\newblock Pitt and Boas inequalities for Fourier and Hankel transforms,
\newblock \emph{J. Math. Anal. Appl}, (2013),
http://dx.doi.org/10.1016/j.jmaa.2013.06.045.
\vspace{-5pt}
\bibitem{CaffarelliKohnNirenberg}
\newblock L. Caffarelli and R. Kohn and L. Nirenberg,
\newblock  \newblock First order interpolation inequalities with weights,
\newblock \emph{Compositio Math}, \textbf{53} (1984), 259--275.
\vspace{-5pt}
\bibitem{PhamKainaneReissig}
\newblock  D.T. Pham and M. Kainane and M. Reissig,
\newblock Global existence for semi-linear structurally damped $\sigma$-evolution models,
\newblock \emph{J. Math. Anal. Appl}, \textbf{431} (2015), 569--596.
\vspace{-5pt}
\bibitem{D'AbbiccoEbert2017}
\newblock M. D'Abbicco and M.R. Ebert,
\newblock A new phenomenon in the critical exponent for structurally damped semi-linear evolution equations,
\newblock \emph{Nonl. Anal}, \textbf{149} (2017), 1--40.
\vspace{-5pt}
\bibitem{D'AbbiccoEbertapplication}
\newblock  M. D’Abbicco, M.R. Ebert,
\newblock An application of $L^p-L^q$ decay estimates to the semi-linear wave equation with parabolic like structural damping,
\newblock \emph{Nonl. Anal}, \textbf{99} (2014), 16--34.
\vspace{-5pt}
\bibitem{D'AbbiccoEbertdiffusion}
\newblock  M. D’Abbicco, M.R. Ebert,
\newblock Diffusion phenomenon for the wave equation with structural damping in the $L^p-L^q$ framework,
\newblock \emph{J. Differ. Equ}, \textbf{256} (2014), 2307--2336.
\vspace{-5pt}
\bibitem{D'AbbiccoReissig2014}
\newblock M. D’Abbicco, M. Reissig,
\newblock Semilinear structural damped waves,
\newblock  \emph{Math. Metho. Appl. Sci}. \textbf{37} (2014) 1570--1592.
\vspace{-5pt}
\bibitem{IkehataTakeda}
\newblock   R. Ikehata and H. Takeda,
\newblock Critical exponent for nonlinear wave equations with frictional and viscoelastic damping terms,
\newblock \emph{Nonl. Anal}. \textbf{148} (2017), 228--253..
\vspace{-5pt}
\bibitem{DaoReissigL1}
\newblock   T.A. Dao and M. Reissig,
\newblock An application of $L^1$ estimates for oscillating integrals to parabolic like semi-linear structurally damped $\sigma$-evolution models,
\newblock \emph{J. Math. Anal. Appl}. \textbf{476} (2019), 426--463.
\vspace{-5pt}
\bibitem{DaoReissigblowup}
\newblock   T.A. Dao and M. Reissig,
\newblock Blow-up results for semi-linear structurally damped $\sigma$-evolution equations. In: Cicognani M., Del Santo D., Parmeggiani A., Reissig M. (eds) Anomalies in Partial Differential Equations. Springer INdAM Series, vol 43. Springer, Cham. https://doi.org/10.1007/978-3-030-61346-4$\_$10. (2021).
\vspace{-5pt}
\bibitem{Daosystem}
\newblock   T.A. Dao,
\newblock Global existence of solutions for weakly coupled systems of
semi-linear structurally damped $\sigma$-evolution models,
\newblock \emph{Applicable. Anal}, (2020), 1--34.

\end{thebibliography}
\end{document}